\documentclass[12pt]{amsart}

\usepackage{amsmath,amssymb,amsfonts,amscd,mathtools}
\usepackage{enumitem}
\usepackage{microtype}
\usepackage[colorlinks=true,linkcolor=blue,citecolor=blue,urlcolor=blue]{hyperref}

\numberwithin{equation}{section}

\newtheorem{theorem}{Theorem}[section]
\newtheorem{proposition}[theorem]{Proposition}
\newtheorem{lemma}[theorem]{Lemma}
\theoremstyle{definition}

\theoremstyle{remark}

\newcommand{\Supp}{\operatorname{Supp}}
\newcommand{\codim}{\operatorname{codim}}

\newcommand{\R}{\mathbb{R}}
\newcommand{\Q}{\mathbb{Q}}

\title[On a conjecture of Demailly-Peternell-Schneider]{On a conjecture of Demailly-Peternell-Schneider: The K\"ahler Case}
\author[{Xin Fu, Juanyong Wang}]{Xin Fu and Juanyong Wang} 
\thanks{Xin Fu is  supported by National Key R\&D Program of China 2024YFA1014800 and NSFC No. 12401073. Juanyong Wang is supported by NSFC (Grants No.12301060 and 12288201) and National Key R\&D Program of China (Grant No. 2021YFA1003100).}
\address{School of Science, Institute for Theoretical Sciences, Westlake University, Hangzhou 310030, China}

\email{fuxin54@westlake.edu.cn}
\address{State Key Laboratory of Mathematical Sciences, Academy of Mathematics and Systems Science, Chinese Academy of Sciences, Beijing 100190, China}

\email{juanyong.wang@amss.ac.cn}

\begin{document}

\begin{abstract}
Let $f: (X,\Delta)\to Y$ be a surjective holomorphic map between two normal K\"ahler varieties, where $(X,\Delta)$ is a log canonical pair,  $-(K_X+\Delta)$ is nef  and $Y$ is $\Q$-Gorenstein. In Fu-Guo-Song-Wang \cite{FGSW}, it is proved  that $-K_Y$ is pseudo-effective if $f$ is a projective morphism. In this paper, prove that $K_Y$ is pseudo-effective without assuming that $f$ is projective.

\end{abstract}

\maketitle

\section{Introduction}

Complex Projective (K\"ahler) varieties with numerically effective anticanonical bundles appear naturally in the Minimal Model Program. These objects include Fano varieties, Calabi-Yau varieties, Abelian varieties and so on. A systematic studies of these varieties is initiated by a sequence of foundational works of Demailly-Peternell-Schneider \cite{DPSI,DPSII,DPSIII}. After that, they are many important progress in understanding the structure of singular varieties with anti-nef canonical bundle, c.f. \cite{Cam2,CCM,CaoI,CH1,CH2,MWJ,MW,Peternell,MWWZ,Zha96,Zha19}. 

A key question in understanding the geometry of these varieties is determining fiberation between them. Let $f:X\to Y$ be a surjective morphism with $-K_X$ nef. What can one say about $-K_Y$. In
this direction, it is conjectured by Demailly-Peternell-Schneider \cite{DPSI} and recently by Peternell \cite{Peternell} that $-K_Y$ is pseudo-effective.  Recent progress in singularity theory extends the question to singular pairs with anti-nef canonical bundle. Indeed  the Demailly-Peternell-Schneider conjecture is firstly proved by Chen-Zhang \cite{CZ}, in the case $(X,\Delta)$ is a anti-nef log canonical pair and $Y$ is a $\mathbb Q$-Gorenstein projective variety, using weak positivity theory originally developed by Viehweg \cite{V}, and later on, generalized by Campana \cite{Cam2} and Lu \cite{Lu}. The recent work of Fu-Guo-Song-Wang \cite{FGSW} extends Chen-Zhang's Theorem to a projective morphism between K\"ahler varieties admit the same type of singularities, using the tool of variation of Bergman metrics \cite{BP,PT,DWZZ}. See also Fu-Han-Zou \cite{FHZ} for partial progress on this conjecture   in the K\"ahler case.

The purpose of this paper is to prove the following result in the compact K\"ahler setting without projectivity assumptions, allowing singularities on both the total space and the base.

\begin{theorem}\label{thm:main}
Let $f: (X,\Delta)\longrightarrow Y$ be a surjective holomorphic map satisfying
\begin{itemize}
\item $X$ and $Y$ are normal compact K\"ahler spaces,
\item $Y$ is $\Q$-Gorenstein,
\item $\Delta\geq 0$ is an $\R$-divisor, $(X,\Delta)$ is log canonical, and $K_X+\Delta$ is $\R$-Cartier,
\item $-(K_X+\Delta)$ is nef,
\end{itemize}
then $-K_Y$ is pseudo-effective.
\end{theorem}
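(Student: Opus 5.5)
The strategy is to reduce to the case already treated in \cite{FGSW}, where the morphism is projective, or to redo the Bergman-kernel positivity argument in the non-projective Kähler setting directly.

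\textbf{Step 1 (reductions).} Passing to a Stein factorization, we may assume $f$ has connected fibres; a finite map $Y'\to Y$ only pulls $-K_Y$ back to $-K_{Y'}$ plus an effective ramification divisor, so this does not hurt. Fix a Kähler form $\omega_X$. Nefness of $-(K_X+\Delta)$ gives, for each $\varepsilon>0$, a smooth metric $h_\varepsilon$ on $-(K_X+\Delta)$ with curvature $\ge -\varepsilon\omega_X$. Take a log resolution $\mu:\tilde X\to X$ and a desingularization $\tilde Y\to Y$ so that the induced map $\tilde f:\tilde X\to\tilde Y$ is a morphism. We write
$$K_{\tilde X}+\tilde\Delta=\mu^*(K_X+\Delta)+E,$$
with coefficients of $\tilde\Delta$ at most $1$ (log canonical) and $E\ge 0$ exceptional. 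Compare $K_{\tilde Y}$ with $\nu^*K_Y$ ($\mathbb Q$-Gorenstein): the discrepancy divisor is $\nu$-exceptional. Since pushforward of pseudo-effective classes along $\nu$ preserves pseudo-effectivity, it suffices to show that $-\nu^*K_Y$ is pseudo-effective. Equivalently, it suffices to show that the relative class
$$-K_{\tilde X/\tilde Y}-\tilde\Delta+\tilde f^*(\text{exceptional correction})$$
relates to $-\tilde f^*K_Y$ via pseudo-effectivity of a relative adjoint bundle.

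\textbf{Step 2 (key positivity).} Argue by contradiction: suppose $-K_Y$ is not pseudo-effective. By duality of cones on compact Kähler spaces (Boucksom--Demailly--Păun--Peternell in the Kähler case), either a movable class or a Kähler-type class $\alpha$ on $Y$ pairs negatively, so that $K_Y+\delta\omega_Y$ is "big-ish" in a suitable sense. Then take $L=m(-(K_X+\Delta))+\text{(ample-like twist)}$ and consider the line bundle
$$m K_{\tilde X/\tilde Y}+\tilde\Delta'+\text{twists}$$
on $\tilde X$. Apply the relative Bergman kernel positivity results of Berndtsson--Păun and Păun--Takayama, whose proofs are analytic and only need $\tilde f$ to be a proper Kähler fibration, not a projective one; this is exactly where projectivity was used in \cite{FGSW} via relative ampleness. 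Using $h_\varepsilon$ and log canonicity (the multiplier ideals are trivial on the generic fibre after taking the $\Delta$ coefficients slightly less than $1$, then passing to the limit), these results give a semipositive singular metric on
$$m K_{\tilde X/\tilde Y}+\mu^*(\Delta-(K_X+\Delta)\cdot m)\ \text{ twisted}.$$
Concretely, the adjoint bundle $K_{\tilde X/\tilde Y}+\tilde\Delta+\mu^*(-(K_X+\Delta))+\varepsilon\,\text{Kähler}$ equals $-\tilde f^*K_{\tilde Y}+E+\varepsilon$, and its relative section $1_E$ is nonzero on generic fibres. Bergman positivity thus yields that $-\tilde f^*K_{\tilde Y}+E+\varepsilon\mu^*\omega_X$ carries a positive current with restricted control. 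Pushing forward via $\tilde f_*$ after wedging with $\omega^{n-m}$ (fibre integration) gives a closed positive current in $-c K_{\tilde Y}+(\text{$\tilde f$-image of exceptional})+\varepsilon(\dots)$, with $c>0$.

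\textbf{Step 3 (cleaning up and main obstacle).} We have to check that the images of the $E$ terms and the discrepancy of $\tilde Y\to Y$ are $\nu$-exceptional or vanish. This is the main obstacle: $\mu$-exceptional divisors can dominate divisors on $\tilde Y$ that are not exceptional over $Y$. The way to handle it is the standard trick: vertical divisors with non-exceptional image are absorbed because lc singularities force the coefficient to be at most the multiplicity bound. Following Chen--Zhang, modify by $\tilde f^*$ of the divisor $B$ with $\tilde f^*B\le E_{\text{vert}}$ maximal, which only improves positivity. Then let $\varepsilon\to0$ using weak compactness of positive currents with bounded mass, and push forward to $Y$ to conclude that $-K_Y$ is pseudo-effective.
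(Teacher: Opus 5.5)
There is a genuine gap in Step~2, and it sits exactly where the K\"ahler case differs from the projective one.

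The relative Bergman kernel theorems (Berndtsson--P\u{a}un, P\u{a}un--Takayama, and the K\"ahler versions of Cao and Wang) require a holomorphic $\Q$-line bundle $L$ with a singular metric of \emph{semipositive} curvature. They also need nonzero fibrewise sections of $K_{\tilde X_y}+L|_{\tilde X_y}$ that are $L^2$ for that metric. Your $L=\tilde\Delta-\mu^*(K_X+\Delta)$ is only nef. The metrics $h_\varepsilon$ have curvature $\geq-\varepsilon\omega_X$, and the compensating term $\varepsilon\omega$ is a transcendental class, not a line bundle. So there is no Bergman kernel for ``$K_{\tilde X/\tilde Y}+L+\varepsilon\omega$''.

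Nor can you use a genuinely positive current in $-(K_X+\Delta)$. In the Demailly--Peternell--Schneider example $\mathbb P(E)\to C$ over an elliptic curve, the only closed positive current in the nef class $c_1(-K_{\mathbb P(E)})$ is $2[C_0]$ for a section $C_0$. Its multiplier ideal kills every fibrewise section.

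The log canonical boundary fails in the same way. Since $E$ and $\tilde\Delta$ share no components, $1_E$ is not $L^2$ for the weight $|s_{\tilde\Delta}|^{-2}$ as soon as $\lfloor\tilde\Delta\rfloor$ has horizontal components. Lowering the coefficients to $1-\delta$ replaces $E$ by $E-\delta\tilde\Delta$, which then has no fibrewise sections at all. In the projective case both defects are cured by an ample twist $\epsilon A$ (compare \eqref{eq:asymklt}). That is where projectivity enters, not in the applicability of $L^2$ methods to K\"ahler fibrations, so your diagnosis of the obstacle is off.

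The missing idea is to isolate the part of the fibration where an ample twist is available. The paper passes to the relative MRC fibration $W\xrightarrow{q}Z\xrightarrow{h}Y$ of a log resolution (Proposition~\ref{prop:relative-mrc}).
\begin{itemize}
\item Fibrewise, $q_y$ is projective by Claudon--H\"oring \cite{CH}. So the projective argument of \cite{FGSW}, with a relatively ample twist, shows that $-q^*K_Z+F$ is pseudo-effective with $F$ exceptional over $X$.
\item By \cite{FGSW}, $\kappa(Z_y)=0$ for very general $y$. So the \emph{untwisted} relative Bergman theorem of Cao and Wang, which involves no nef line bundle, gives $K_{Z/Y}+D$ pseudo-effective with $\codim_Y h(\Supp D)\geq2$ (Proposition~\ref{prop:ztoy}).
\end{itemize}
Summing gives that $-p_W^*K_Y+F+q^*D$ is pseudo-effective.

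Your Step~3, which you flag as the main obstacle, is actually the easy part. Integrate along fibres against $(\mu^*\omega_X)^r$ instead of a K\"ahler form on $\tilde X$. This annihilates every $\mu$-exceptional term, since $\mu_*[E]=0$, whatever divisor of $Y$ it dominates (Lemma~\ref{lem:descent2}). No Chen--Zhang type correction is needed.

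Finally, in Step~1 the Stein factorization base need not be $\Q$-Gorenstein, so you cannot simply apply the connected-fibre case to it. The paper instead runs the argument over $Y$ itself.
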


\subsection{Outline of the proof}
For simplicity, we assume that the base space $Y$ of the fiberation $f$ is smooth. Pick a log resolution $\pi: M\to X$, set $p=f\circ\pi$, and write
\[
 K_M+\Delta_M=\pi^*(K_X+\Delta)+E,
\]
where $E,\Delta_M$ are effective divisors and the coefficients of $\Delta_M$ are no bigger than one. 
By passing to a modification $W$ of $M$, Campana's relative rational quotient is resolved to a diagram
\begin{equation}\label{eq:intro-diagram}
\begin{CD}
 W @>{q}>> M\\
 @V{\rho}VV @VV{p}V\\
 Z @>{h}>> Y,
\end{CD}
\end{equation}
with $W$ and $Z$ being smooth compact K\"ahler manifolds. The general fibres of $q$ is rationally connected and for very general $y\in Y$, the morphism $q_y: W_y\to Z_y$ is a holomorphic model of the MRC quotient of $X_y$.

By Claudon--H\"oring \cite{CH}, the fibrewise MRC morphism $q_y$ is projective.  By Fu-Guo-Song-Wang \cite{FGSW}, the base space $Z_y$ of the projective MRC quotient $q_y$ has zero Kodaira dimension
\[
 \kappa(Z_y)=0
\]
for very general $y$. Then the variation of relative Bergman construction developed by \cite[Theorem 2.3]{Wang}  yields pseudo-effectivity of the resolved relative canonical class $K_{Z/Y}$.

Then we treat the  rationally connected part $W\to Z$. By Fu-Guo-Song-Wang \cite[Proposition 3.2]{FGSW}, $q^*K_Z+F$ is pseudo-effective, where $F$ is exceptional over $X$. Adding $q^*K_{Z/Y}$ yield that 
\[
 -p_W^*K_{Y}+F
\]
is pseudo-effective. The final step is a current-theoretic descent argument by showing that exceptional term $F$ has zero pushforward to $Y$.  Thus $-K_Z$ is pseudo-effective.

The proof is organized as follows. Section~\ref{sec:prelim} fixes the current formalism on normal spaces and records the fibre integration. Section~\ref{sec:mrc} constructs the relative MRC model, proves the generic-restriction statement, and establishes the corrected pseudo-effectivity of $K_{Z/Y}$ where $Z$ has generic fiber with zero Kodaira dimension over $Y$. Section~\ref{sec:proof} finishes the proof.

\medskip

\textbf{Acknowledgement:} The first author is grateful to Professor Campana for his patient and thorough explanation of MRC fiberation during his visit to Beijing. 
\section{Preliminary lemmas}\label{sec:prelim}

\subsection{Forms and currents on normal spaces}

All complex spaces in this paper are irreducible. We refer the reader to \cite[Section 2.1]{MW} for  differential form and current conventions on analytic spaces. 

The most relevant concept for us is the proper direct image convention used below. If $g:X\to Y$ is proper map between complex analytic spaces, $R$ is a current on $X$, and $\varphi$ is a compactly supported smooth test form on $Y$ (Here a smooth form on a reduced analytic space is locally the restriction of a smooth ambient form under a local embedding), then
\[
 \langle g_*R,\varphi\rangle:=\langle R,g^*\varphi\rangle.
\]
The direct image satisfies
\[
 g_*(R\wedge g^*\eta)=g_*R\wedge\eta
\]
for every smooth form $\eta$ on $B$ for which the products have the right bidegrees. If $u$ is a distribution and $\Omega$ is a smooth closed form, the same definition gives
\[
 g_*(dd^c u\wedge\Omega)=dd^c g_*(u\Omega).
\]


We say that a statement holds for \emph{very general} points of an irreducible complex space if it holds outside a countable union of proper closed analytic subsets.

Fix a holomorphic map $V\to X$ with connected fibers, we prepare two lemmas which will be used to push forward pseudo-effective class on $V$ to pseudo-effective class on $X$.
\begin{lemma}\label{lem:descent1}
Let $\mu: V\to X$ be a proper modification from a smooth compact K\"ahler manifold to a normal compact K\"ahler space, and let $\alpha\in H^{1,1}(X,\R)$. Then $\alpha$ is pseudo-effective if and only if
\[
 \mu^*\alpha+c_1(F)
\]
is pseudo-effective for some $\mu$-exceptional $\R$-divisor $F$, not necessarily effective.
\end{lemma}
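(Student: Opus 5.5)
The "only if" direction is immediate: if $\alpha$ is pseudo-effective, represented by a closed positive $(1,1)$-current $T$ with local potentials on $X$, then $\mu^*T$ is a closed positive current on $V$ representing $\mu^*\alpha$. One gets it by pulling back local potentials, which are quasi-psh and not identically $-\infty$ on any component since $\mu$ is bimeromorphic. So $F=0$ works.

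For the converse, suppose $\mu^*\alpha+c_1(F)$ is represented by a closed positive current $S$ on $V$. The plan is to push $S$ forward and check that the exceptional term disappears. First, $\mu_*S$ is a closed positive $(1,1)$-current on $X$; positivity and closedness are preserved by the direct image convention recalled above. Second, since $F$ is $\mu$-exceptional, its components have images of codimension $\geq 2$ in $X$. Writing $F=\sum a_i E_i$ and choosing a smooth representative $\theta_F$ of $c_1(F)$, we have $[F]=\theta_F+dd^c\psi$. Hence $\mu_*[F]=0$, because the current of integration over $E_i$ pushes forward to zero: a $(1,1)$-current supported on a set of codimension $\geq 2$ of the relevant dimension vanishes. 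Third, with a smooth representative $\theta$ of $\alpha$ (locally $dd^c$ of a smooth potential), write $S=\mu^*\theta+[F]+dd^c u$ for some distribution $u$ on $V$. Pushing forward gives
\[
 \mu_*S=\theta+dd^c(\mu_*u),
\]
using $\mu_*\mu^*\theta=\theta$ (valid since $\mu$ is an isomorphism over a dense Zariski open set and $\theta$ has $L^1_{loc}$ coefficients) together with the identity $g_*(dd^c u\wedge\Omega)=dd^c g_*(u\Omega)$ with $\Omega=1$. Therefore $\mu_*S$ is a closed positive current in the class $\alpha$, and $\alpha$ is pseudo-effective.

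The main obstacle is the last step on the singular space $X$. We must make sense of "the class of $\mu_*S$ equals $\alpha$" in $H^{1,1}(X,\R)$, which on a normal space is defined via local $\partial\bar\partial$-potentials. We also need to know that $\mu_*u$ is a genuine distribution whose $dd^c$ gives a positive current with quasi-psh local potentials. I would handle this locally. On a small Stein open set $U\subset X$ where $\theta=dd^c g$, the current $S$ on $\mu^{-1}(U)$ has potential $\varphi=\mu^*g+u+\psi$, which is psh on $\mu^{-1}(U)\setminus \Supp F$ up to the divisor terms. The function $\mu_*\varphi$ is then psh on $U_{\rm reg}\setminus\mu(\Supp F)$ and locally bounded above near the image of $\Supp F$, which has codimension $\geq 2$. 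By Grauert–Remmert extension across codimension-$2$ sets together with normality of $X$, it extends to a weakly psh function on $U$, hence psh. These local potentials glue to a positive current in $\alpha$.

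Local boundedness above needs care. Where $a_i<0$, the potential involves $-a_i\log|s_i|$, which is bounded above. Where $a_i>0$, it involves $\log|s_i|$ with a positive coefficient, which is also bounded above. So boundedness above holds in both cases, and the extension goes through.
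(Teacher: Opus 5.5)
The paper gives no argument here beyond quoting \cite[Lemma~3.1]{FGSW}. Your overall strategy is the right one: push $S$ forward, kill $\mu_*[F]$ by the support theorem, get $\mu_*S=\theta+dd^c\mu_*u$, then produce psh local potentials. It is the same mechanism as in the paper's Lemma~\ref{lem:descent2} with $\Omega^r$ replaced by $1$. The ``only if'' direction and these current identities are fine.

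The final step, which you rightly call the main obstacle, fails as written. With $\psi$ the global function from $[F]=\theta_F+dd^c\psi$, your $\varphi=\mu^*g+u+\psi$ has $dd^c\varphi=S-\theta_F$. This is not positive, so $\varphi$ is not psh off $\Supp F$. Nor can you take a potential of $[F]$ on all of $\mu^{-1}(U)$, since exceptional divisors are in general not principal there. The function that is psh off $\Supp F$ is $w=\mu^*g+u$, because $dd^c w=S-[F]$. For $w$ your boundedness argument breaks. Writing $u=\phi-\psi$ with $\phi$ quasi-psh, $w$ contains $-a_i\log|s_i|^2_{h_i}$, which tends to $+\infty$ along $E_i$ when $a_i>0$. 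Your bookkeeping assigns coefficient $|a_i|$ for both signs of $a_i$, which cannot be right since the potential is linear in $F$. Also, $\mu$ need not be biholomorphic over $U_{\rm reg}\setminus\mu(\Supp F)$, because $F$ need not contain all exceptional divisors.

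The repair is short. Let $A\subset X$ be the locus over which $\mu$ is not biholomorphic. Since $X$ is normal, Zariski's main theorem gives $\codim A\geq 2$, and $A\supset X_{\rm sing}$. Over $U\setminus A$, $\mu$ is biholomorphic and $\mu_*S=dd^c(g+\mu_*u)\geq 0$, so $g+\mu_*u$ agrees a.e.\ with a psh function there. Grauert--Remmert's theorem extends psh functions on a normal space across analytic sets of codimension $\geq 2$ with \emph{no} boundedness hypothesis, so the boundedness discussion is unnecessary. The extended local potentials differ from the $g_U$ by a single global function, equal a.e.\ to $\mu_*u$. This gives a closed positive current with local potentials in $\alpha$, as required.
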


\begin{proof}
This is \cite[Lemma 3.1]{FGSW}. We emphasize that  $F$ can have both positive and negative coefficients.
\end{proof}

\begin{lemma}\label{lem:descent2}
Let $\mu: V\to X$ be a modification from a compact K\"ahler manifold to a normal compact K\"ahler space, let $f: X\to Y$ be a surjective holomorphic map onto a normal compact K\"ahler space, and put $p=f\circ\mu$. Let $L$ be an $\R$-line bundle on $Y$. Suppose that there is an effective $\R$-divisor $G$ on $V$ and a $\mu$-exceptional $\R$-divisor $F$, with no sign condition on $F$, such that
\[
 p^*c_1(L)+[G]+[F]
\]
is pseudo-effective and
\[
 \codim_Y p(\Supp G)\geq 2.
\]
Then $L$ is pseudo-effective.
\end{lemma}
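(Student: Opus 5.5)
We push a closed positive current forward along $p$, discard the contributions of $G$ and $F$, and then apply the extension property of closed positive currents across analytic sets of codimension at least two.

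\textbf{Step 1: the current on $V$.} Let $d=\dim V-\dim Y$. Fix a Kähler form $\omega_V$ on $V$ and a smooth closed real $(1,1)$-form $\theta$ on $Y$ representing $c_1(L)$. By hypothesis, for every $\varepsilon>0$ there is a closed current $T_\varepsilon\geq -\varepsilon\omega_V$ with
\[
 T_\varepsilon = p^*\theta+[G]+[F]+dd^c u_\varepsilon .
\]
Since $G\geq 0$ and $[F]$ is a closed current, the current $S_\varepsilon:=T_\varepsilon-[G]-[F]=p^*\theta+dd^c u_\varepsilon$ is closed and cohomologous to $p^*\theta$. It need not be positive.

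\textbf{Step 2: push forward.} Define
\[
 R_\varepsilon:=\frac{1}{c}\,p_*\bigl(T_\varepsilon\wedge\omega_V^{d}\bigr),
\]
where $c=\int_{V_y}\omega_V^d$ for a general fibre $V_y$; this constant is independent of $y$ over the locus where $p$ is flat. The direct image rules recorded above give
\[
 p_*(p^*\theta\wedge\omega_V^d)=c\,\theta, \qquad p_*(dd^c u_\varepsilon\wedge\omega_V^d)=dd^c p_*(u_\varepsilon\omega_V^d).
\]
The first identity holds on $Y$ because $p_*\omega_V^d$ is a closed current of degree $0$, hence the constant $c$. Therefore
\[
 R_\varepsilon=\theta+dd^c v_\varepsilon+\tfrac1c\,p_*\bigl(([G]+[F])\wedge\omega_V^d\bigr).
\]
The error term satisfies $R_\varepsilon\geq -\varepsilon\,\tfrac1c\,p_*\omega_V^{d+1}$. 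Here $p_*\omega_V^{d+1}$ is a positive current; it is not smooth, but it is dominated by $C\omega_Y$ off a proper analytic set. To control this I would pass to a desingularization of $Y$, or instead run the whole argument with Kähler forms pulled back from $Y$, and absorb the error there.

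\textbf{Step 3: remove $G$ and $F$.} A divisor $D$ on $V$ contributes to $p_*([D]\wedge\omega_V^d)$ only if $D$ dominates a divisor of $Y$. Components that dominate $Y$ are excluded because $p_*$ of a $(1,1)$-current of this form lands in bidegree $(1,1)$ only via components mapping onto divisors. For $G$, the images have codimension $\geq 2$, so its pushforward is a closed $(1,1)$-current supported on an analytic set of codimension $\geq 2$. Such a current vanishes by the support theorem; the normal-space version can be checked on the regular locus and then extended. For $F$, every component is $\mu$-exceptional, so its image in $X$ has codimension $\geq 2$. A component of $F$ could a priori still map onto a divisor of $Y$, but only if it lies over a non-flat or degenerate locus. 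This is the delicate point.

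\textbf{Step 4: conclude.} With $W\subset Y$ the codimension $\geq 2$ set $p(\Supp G)\cup Y_{\rm sing}$, we get $R_\varepsilon|_{Y\setminus W}\geq -\varepsilon C\omega_Y$ in the class of $\theta$. By the Siu/El Mir extension theorem, and since $H^{1,1}$-classes are determined away from codimension $2$ on a normal space, $c_1(L)$ is pseudo-effective in the limit $\varepsilon\to 0$, using weak compactness of $\{R_\varepsilon\}$, whose masses are bounded cohomologically.

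\textbf{Main obstacle.} I expect the hardest step to be the $F$ term. Exceptional divisors over $X$ can dominate divisors $D\subset Y$ when $f$ has non-equidimensional fibres, and then $p_*([F]\wedge\omega_V^d)$ could be a nonzero multiple of $[D]$ with negative coefficient. My plan is to handle this by first applying Lemma~\ref{lem:descent1}-style reasoning on $X$. Restricted over a general point of $D$, the fibre of $f$ has the same dimension $d$. A $\mu$-exceptional divisor has image in $X$ of dimension $\leq \dim V-2$, so its image in a fibre of $f$ over $D$ has dimension $\leq d-1$. Hence its fibres over $D$ have dimension at most $d-1$ generically, and the integral of $\omega_V^d$ over these fibres vanishes. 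If this dimension count fails on some locus, that locus has codimension $\geq 2$ in $Y$ and is handled by Step 3.
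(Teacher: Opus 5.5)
There is a genuine gap in your treatment of $F$. It comes from the weight $\omega_V^d$, which is built from a K\"ahler form on $V$: with this weight, $p_*([F]\wedge\omega_V^d)$ does not vanish in general.

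Your dimension count bounds the fibres of $\mu(F)\subset X$ over a divisor $D\subset Y$, not the fibres of $F\subset V$. Since $F\to\mu(F)$ has positive-dimensional fibres, $F_y$ can have dimension $d$ (for instance when $\dim\mu(F)=\dim V-2$), and then $\int_{F_y}\omega_V^d>0$.

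Here is a concrete case. Let $Y$ be a curve, let $X\to Y$ be smooth, and let $\mu$ be the blow-up of a smooth codimension-two centre $C$ contained in the fibre $X_{y_0}$. Then $E=\mu^{-1}(C)$ lies over $y_0$ and has dimension $d$, so $p_*([E]\wedge\omega_V^d)=\bigl(\int_E\omega_V^d\bigr)[y_0]\neq0$, even though $f$ is flat. With $F=E$, your identity only yields $\theta+dd^c v_\varepsilon=R_\varepsilon-\kappa[y_0]$ with $\kappa>0$. Since $y_0$ is a divisor, no extension argument across codimension two can repair this.

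Likewise, the assertion in Step 3 that components dominating $Y$ contribute nothing is false. A horizontal exceptional divisor (blow up a codimension-two centre that dominates $Y$) pushes forward to a nonzero positive $(1,1)$-current on all of $Y$. Only the $G$-part of Step 3 is sound, because $p(\Supp G)$ has codimension $\geq 2$.

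The missing idea is the choice of weight: the form must come from $X$, not from $V$. Use $\Omega=\mu^*\omega_X$, where $\omega_X$ is a K\"ahler form on $X$. This form has three properties that make the argument work:
\begin{itemize}
\item It is smooth, closed and semipositive, so $T\wedge\Omega^d$ is still positive.
\item The fibre mass $c=\int_{V_y}\Omega^d$ is positive, because $\mu$ is biholomorphic over a dense open subset of a general fibre.
\item The projection formula gives $p_*([F^\pm]\wedge\Omega^d)=f_*\bigl(\mu_*[F^\pm]\wedge\omega_X^d\bigr)=0$. Indeed, $\mu_*[F^\pm]$ is a closed positive $(1,1)$-current supported on $\mu(\Supp F^\pm)$, which has codimension $\geq 2$ in the normal space $X$, so it vanishes.
\end{itemize}
This removes every exceptional component at once, horizontal or vertical, with no case analysis.

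The $\varepsilon$-approximation is also unnecessary. Pseudo-effectivity on the compact K\"ahler manifold $V$ gives a closed positive current $T=p^*\theta+[G]+[F]+dd^cu$ directly (or you can take a weak limit of your $T_\varepsilon$). Then $p_*(T\wedge\Omega^d)/c$ is already a global closed positive current in $c_1(L)$. This eliminates both the uncontrolled error term involving $p_*\omega_V^{d+1}$ and the extension step in Step 4.
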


\begin{proof}
Put $r=\dim X-\dim Y$. Choose a K\"ahler form $\omega_X$ on $X$ in the sense fixed above and set $\Omega=\mu^*\omega_X$. Then $\Omega$ is a smooth closed semipositive $(1,1)$-form on $V$. Choose a smooth curvature form $\theta$ representing $c_1(L)$ and a closed positive current $T$ in the class $p^*c_1(L)$. Since $V$ is a compact K\"ahler manifold, the $dd^c$-lemma for currents gives a global distribution $u$ such that
\begin{equation}\label{eq:current-decomposition}
 T=p^*\theta+[G]+[F]+dd^c u.
\end{equation}
The wedge $T\wedge\Omega^r$ is well-defined because $\Omega$ is smooth, and
\[
 S:=p_*(T\wedge\Omega^r)
\]
is a closed positive $(1,1)$-current on $Y$.

The current $p_*(\Omega^r)$ is a positive $d$-closed $(0,0)$-current, hence a constant $c$ on the irreducible space $Y$. We claim that $c>0$. Let $X^\circ\subset X_{\mathrm{reg}}$ be a dense open subset over which $\mu$ is an isomorphism. Since $X\setminus X^\circ$ is a proper analytic subset, a general fibre $X_y$ is not contained in it. On the nonempty open subset
\[
 \mu^{-1}(X_y\cap X^\circ)\subset V_y
\]
the map $\mu$ is biholomorphic and $\Omega|_{V_y}=\mu^*(\omega_X|_{X_y})$ is strictly positive in the fibre directions. Hence $\Omega^r|_{V_y}$ has strictly positive mass there, and therefore
\begin{equation}\label{eq:fibre-volume}
 c=\int_{V_y}\Omega^r>0
\end{equation}
for every $y$ in the dense smooth, pure-dimensional locus of $p$. By the projection formula, proved above by duality,
\begin{equation}\label{eq:theta-push}
 p_*(p^*\theta\wedge\Omega^r)=\theta\wedge p_*(\Omega^r)=c\theta.
\end{equation}

The current $p_*([G]\wedge\Omega^r)$ is closed and positive and is supported on $p(\Supp G)$. This analytic set has codimension at least two, so the first support theorem \cite{Dem} gives
\begin{equation}\label{eq:G-vanish}
 p_*([G]\wedge\Omega^r)=0.
\end{equation}

Write $F=F^+-F^-$ with $F^\pm\geq0$ and $F^\pm$ $\mu$-exceptional. The positive closed $(1,1)$-current $\mu_*[F^\pm]$ is supported on $\mu(\Supp F^\pm)$, which has codimension at least two; hence it vanishes by the same support theorem. The projection formula then gives
\begin{equation}\label{eq:F-vanish}
 \mu_*([F^\pm]\wedge\Omega^r)
 =\mu_*[F^\pm]\wedge\omega_X^r=0,
\end{equation}
and consequently $p_*([F]\wedge\Omega^r)=0$.

Finally, since $\Omega$ is closed, proper direct image commutes with $dd^c$ and
\[
 p_*(dd^c u\wedge\Omega^r)=dd^c p_*(u\Omega^r).
\]
Pushing forward \eqref{eq:current-decomposition} and using
\eqref{eq:theta-push}--\eqref{eq:F-vanish}, we obtain
\[
 S=c\theta+dd^c p_*(u\Omega^r).
\]
Thus $S/c$ is a closed positive current representing $c_1(L)$. Thus  $L$ is pseudo-effective.
\end{proof}


\section{The relative MRC quotient}\label{sec:mrc}

In this section, we always assume that $f:X\to Y$ is a surjective holomoprhic map with connected fibers. This is unnecessarily but is helpful to ease notations.

In the following lemma, we do a modification of the singular pair $(X,\Delta)$ and then derive some standard properties of  generic fibers.
\begin{lemma}\label{lem:genericfiber}
Let $f: (X,\Delta)\to Y$ satisfy the hypotheses of Theorem~\ref{thm:main}, and let $\pi: M\to X$ be a log resolution such that
\begin{equation}\label{eq:dis}
 K_M+\Delta_M=\pi^*(K_X+\Delta)+E,
\end{equation}
where $\Delta_M,E\geq0$, the two divisors have no common components, $E$ is $\pi$-exceptional, and every coefficients of $\Delta_M$ are no bigger than  one. Then there is a  Zariski open subset $U\subset Y_{\mathrm{reg}}$ such that, for every $y\in U$,
\begin{enumerate}
\item $X_y$ is normal, $M_y$ is smooth, and $\pi_y: M_y\to X_y$ is a log resolution,
\item $\Supp(\Delta_M+E)$ is relatively simple normal crossing over $U$,
\item if $\Delta_y$ denotes the restriction to $X_y$ of the horizontal part of $\Delta$, then one has
\[(K_X+\Delta)|_{X_y}=K_{X_y}+\Delta_y,\]
 \[K_{M_y}+\Delta_{M,y}
 =\pi_y^*(K_{X_y}+\Delta_y)+E_y.
\]
\item $(X_y,\Delta_y)$ is log canonical and $-(K_{X_y}+\Delta_y)$ is nef.
\end{enumerate}
\end{lemma}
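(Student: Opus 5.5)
The argument is generic smoothness plus restriction of the discrepancy formula \eqref{eq:dis}, all carried out over a Zariski open set of $Y$ that we shrink step by step. We work with $p=f\circ\pi:M\to Y$. Over $Y_{\mathrm{reg}}$ we apply generic smoothness for proper holomorphic maps (analytic Sard together with Remmert's proper mapping theorem) to the following data: $p$ itself, the restriction of $p$ to every stratum of the snc divisor $\Supp(\Delta_M+E)+\mathrm{Exc}(\pi)$, and the restriction of $p$ to every intersection of components. This yields a Zariski open $U_1\subset Y_{\mathrm{reg}}$ over which $p$ is smooth and all strata are smooth over $U_1$. Vertical components of $\Delta_M$ and $E$ are those whose $p$-image is a proper subset of $Y$, so we also remove their images. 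Over the resulting set, $M_y$ is smooth and $(\Delta_M+E)|_{M_y}$ is an snc divisor. This gives (2) and the smoothness part of (1).

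For the normality of $X_y$ we use that $X$ is normal, so $\codim X_{\mathrm{sing}}\ge2$, and hence $X_{\mathrm{sing}}\cap X_y$ has codimension $\ge2$ in $X_y$ for general $y$ by a dimension count on $f|_{X_{\mathrm{sing}}}$. Also, $f$ is flat over a Zariski open set, and there the fibres are Cohen--Macaulay in codimension one. More precisely, Serre's $S_2$ property holds on general fibres: by the relative version of Serre's criterion (the locus where fibres fail $S_2$ or $R_1$ is constructible and analytic), a general fibre of a flat map with normal total space is normal. We shrink $U$ accordingly. Then $\pi_y:M_y\to X_y$ is proper and bimeromorphic, since $\pi$ is an isomorphism over a dense open set $X^\circ$ and $X_y\not\subset X\setminus X^\circ$ for general $y$. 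Its exceptional locus is $\mathrm{Exc}(\pi)\cap M_y$, which is snc together with $\Delta_{M,y}$. Thus $\pi_y$ is a log resolution, completing (1).

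For (3), by adjunction for a smooth fibre we have $K_{M}|_{M_y}=K_{M_y}$, because $p^*K_Y$ is trivial near the fibre since $U\subset Y_{\mathrm{reg}}$. On $X$, $X_y\cap X_{\mathrm{reg}}$ is a big open subset of $X_y$ on which $f$ is smooth, so $K_X|_{X_y}=K_{X_y}$ there, and the vertical components of $\Delta$ do not meet $X_y$. Since $K_X+\Delta$ is $\R$-Cartier, its restriction is an $\R$-Cartier divisor agreeing with $K_{X_y}+\Delta_y$ in codimension one, so the two coincide on the normal space $X_y$. We then restrict \eqref{eq:dis} to $M_y$ and use that $\pi^*$ commutes with restriction to obtain the second formula, where $E_y$ is exceptional for $\pi_y$.

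For (4), nefness restricts to subvarieties in the Kähler sense: pull back a smooth representative $\theta_\varepsilon\ge-\varepsilon\omega$ and restrict it to $X_y$. For log canonicity, the restricted formula in (3) shows that on the log resolution $\pi_y$ all coefficients of $\Delta_{M,y}-E_y$ are $\le1$, because the components of $\Delta_{M,y}$ are components of restrictions of components of $\Delta_M$. Log canonicity can be tested on a single log resolution, so $(X_y,\Delta_y)$ is lc.

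The main obstacle is the normality of $X_y$ and the identification $(K_X+\Delta)|_{X_y}=K_{X_y}+\Delta_y$ in the analytic, non-projective setting. This requires a Bertini/generic-normality theorem for fibres of proper maps between complex spaces. I would try first to quote the analytic generic fibre normality result (e.g.\ via flatness and the openness of the normal locus in flat families), and then use the codimension-two extension on $X_y$.
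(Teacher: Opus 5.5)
Your proposal is correct and follows essentially the same route as the paper. Both shrink to a Zariski open $U\subset Y_{\mathrm{reg}}$ using generic flatness and generic smoothness for $p$ and the snc strata, quote an openness-of-fibrewise-normality result (the paper cites Frisch and Fischer), and use a dimension count to put the bad locus in codimension $\geq 2$ on $X_y$. Both then do adjunction in codimension one, extend it through the $\R$-Cartier property of $K_X+\Delta$, and restrict the discrepancy formula to obtain log canonicity and nefness. The differences are cosmetic. You add $\mathrm{Exc}(\pi)$ to the snc strata, which makes the log-resolution claim for $\pi_y$ slightly cleaner. Your assertion that $f$ is smooth along $X_y\cap X_{\mathrm{reg}}$ is most easily justified, as the paper does, on $X_y\cap X^\circ$: there $\pi$ is an isomorphism and $p$ is already smooth, and the complement has codimension $\geq 2$ in $X_y$.
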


\begin{proof}
In the following argument, $U$ denotes a Zariski open set of $Y$, which might change from line to line.

\textbf{Proof of item (1) and (2):} By Frisch's generic flatness theorem \cite{Frisch}, one may assume that the map $f$ is flat by shrinking the Zariski open set $Y_{\mathrm{reg}}$. Since $X$ is normal, its generic fibre is normal. By Fischer's openness of fibrewise normality \cite[Proposition~3.22]{Fischer}, after  further shrinking $U$, one may assume that every fibre $X_y$ is normal.

The maps from the smooth manifold $M$ and from every stratum of the simple normal crossing divisor $\Supp(\Delta_M+E)$ are holomorphic. Thus for a stratum which dominates $Y$, its critical values is a proper analytic subset. Also, the image of a non-dominating stratum is also proper analytic subset. Removing these finitely many proper analytic subvarieties, we now restrict the map $f:M\to Y$ to the preimage of $U$. Then the map $f$ from $f^{-1}(U)$ and all dominating strata to $Y$ are smooth. Thus $M_y$ is smooth and $\Supp(\Delta_M+E)|_{M_y}$ is simple normal crossing for $y\in U$.

\textbf{Proof of item (3) and (4):} Decompose $\Delta=\Delta_{hor}\cup\Delta_{ver}$ to the $f$-horizontal components and $f$-vertical components and let $F$ be the support of the $\pi$ exceptional divisors.
Set\[Z=X_{\mathrm{sing}}\cup\mathrm{\pi}({F})\cup\Delta_{\mathrm{ver}}\]
After removing those components of $Z$, which do not dominate $Y$, then the remaining components of $Z$ have codimension at least two in $X$. Flatness and the fibre-dimension theorem then yield that
\[
 \codim_{X_y}(Z\cap X_y)\geq2
\]
for $y$ in a further shrinked nonempty Zariski open set $U$. The restriction \[\pi_y:M_y\rightarrow X_y\] is proper and is an isomorphism over $X_y\setminus(Z\cap X_y)$, so it is bimeromorphic. Together with the fact that $(M_y,\Supp(\Delta_M+E)|_{M_y})$ is simple normal crossing, $\pi_y$ is a log resolution.

We next verify adjunction at codimension one. Let $x$ be a codimension-one point of $X_y$, then $x\notin Z$. The map $\pi$ is a local biholomorphism near the unique point over $x$, and $M\to Y$ is smooth there; hence $f$ is a submersion near $x$. Since $Y$ is smooth, local coordinates on the base trivialize the normal bundle of the fibre. Ordinary adjunction gives
\[
 K_M|_{M_y}=K_X|_{X_y}=K_{X_y}
\]
at codimension one. Every horizontal component of $\Delta$ meets $X_y$ properly, while no vertical component meets the chosen fibres after the preceding shrinking. Thus the same codimension-one calculation identifies the Weil $\R$-divisor associated with $(K_X+\Delta)|_{X_y}$ with $K_{X_y}+\Delta_y$. The left-hand side is $\R$-Cartier, so $K_{X_y}+\Delta_y$ is $\R$-Cartier and the equality extends over the normal fibre. Item $(3)$ is proved. 

On the smooth fibre $M_y$, ordinary adjunction gives $K_M|_{M_y}=K_{M_y}$. Restricting \eqref{eq:dis} yields \[K_{M_y}+\Delta_{M,y}
 =\pi_y^*(K_{X_y}+\Delta_y)+E_y.
\] Every component of $E_y$ maps into $Z\cap X_y$, which has codimension at least two, so $E_y$ is $\pi_y$-exceptional. Since every coefficient of $\Delta_{M,y}$ is at most one, the restricted discrepancy equality proves that $(X_y,\Delta_y)$ is log canonical.

Finally, it follows from
\[
 -(K_{X_y}+\Delta_y)=-(K_X+\Delta)|_{X_y}
\]
that  $-(K_{X_y}+\Delta_y)$ is nef.
\end{proof}

The following lemma is a preparation of the relative MRC quotient construction in Proposition \ref{prop:relative-mrc}.
\begin{lemma}\label{lem:modifiber}
Let $\mu: X\to Z$ be a proper modification between irreducible reduced complex spaces, and suppose that there are proper surjective holomorphic maps
\[
 g_X: X\to Y,
 \qquad
 g_Z: Z\to Y
\]
to an irreducible complex space such that $g_X=g_Z\circ\mu$. Then there is a Zariski-open subset $U\subset Y$ such that for every $y\in U$, the reduced fibres $X_y$ and $Z_y$ are pure-dimensional, and $\mu_y: X_y\to Z_y$ maps each irreducible component of $X_y$ bimeromorphically onto an irreducible component of $Z_y$. 
\end{lemma}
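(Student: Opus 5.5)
The plan is to isolate the exceptional locus and prove that it does not swallow any whole fibre component over a general point of $Y$. Since $\mu$ is a proper modification, there is a nowhere dense closed analytic subset $B\subset Z$ such that $\mu$ restricts to a biholomorphism $X\setminus \mu^{-1}(B)\to Z\setminus B$. Here $\mu^{-1}(B)$ is nowhere dense in $X$. After shrinking, I may take $B$ to be the exceptional locus together with $Z_{\mathrm{sing}}$, but only the nowhere-density of $B$ and of $E:=\mu^{-1}(B)$ is needed. The goal is a Zariski-open set $U\subset Y$ with the following property: for $y\in U$, no irreducible component of $X_y$ lies inside $E$, and no irreducible component of $Z_y$ lies inside $B$.

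First I will arrange pure-dimensionality. By generic flatness (Frisch) applied to $g_X$ and $g_Z$, or more elementarily by semicontinuity of fibre dimension for proper maps, there is a Zariski-open $U_0\subset Y$ over which both maps are flat. Hence all fibres there are pure of dimension $\dim X-\dim Y=\dim Z-\dim Y$; the two dimensions agree because $\dim X=\dim Z$.

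Next I handle the bad sets. Decompose $E=\bigcup E_i$ into irreducible components; this is a finite decomposition because $X$ is compact over $Y$, or at least locally finite, and properness lets one work with finitely many components over relatively compact pieces. If $g_X(E_i)\neq Y$, then $g_X(E_i)$ is a proper analytic subset by Remmert, and I remove it. If $E_i$ dominates $Y$, then $\dim E_i<\dim X$. Again by semicontinuity/generic flatness applied to $E_i\to Y$, over a Zariski-open set the fibres $(E_i)_y$ have dimension $\dim E_i-\dim Y<\dim X_y$. Such fibres cannot contain a component of the pure-dimensional $X_y$. Doing the same for the components of $B$ with respect to $g_Z$ produces $U$.

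For $y\in U$, let $C$ be a component of $X_y$. Then $C\setminus E$ is dense open in $C$, and $\mu$ maps it isomorphically onto an open subset of $Z_y\setminus B$. The image $\mu(C)$ is therefore an irreducible closed analytic set (by properness) of dimension $\dim X_y$, so it is a component of $Z_y$. The map $C\to \mu(C)$ is generically injective, being an isomorphism off $E$, so it is bimeromorphic. Conversely, every component of $Z_y$ meets $Z_y\setminus B$, where $\mu^{-1}$ exists, so it arises this way; this surjectivity onto components is a bonus.

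The main obstacle is the "Zariski-open" claim in the non-compact setting, where $E$ might have infinitely many components. I would handle this by noting that $Y$ and the maps are proper, so the images of the components form a locally finite family. Taking the union of the bad images then gives a closed analytic subset. In the intended application everything is compact, so finiteness is automatic.
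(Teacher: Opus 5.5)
Your proposal is correct and follows essentially the paper's argument. Both proofs apply generic flatness to $X\to Y$, to $Z\to Y$ and to the dominating components of the exceptional loci, and remove the images of the non-dominating components. Both then conclude that no fibre component lies in the exceptional set, so $\mu$ is biholomorphic on a dense open part of each component. The differences are cosmetic: you pass from components of $X_y$ to their images, whereas the paper passes from components of $Z_y$ to the closures of their preimages. You also add a useful remark on local finiteness of the bad loci in the non-compact case, which the paper leaves implicit.
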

\begin{proof}
Let $Z^{\circ}\subset Z$ be the largest open subset over which $\mu$ is biholomorphic, and put
\[
 C:=Z\setminus Z^{\circ},
 \qquad
 \widetilde C:=X\setminus\mu^{-1}(Z^{\circ}).
\]
Both $C$ and $\widetilde C$ are nowhere-dense analytic subsets. First remove from $Y$ the images of all irreducible components of $C$ and $\widetilde C$ which do not dominate $Y$. Apply generic flatness to $X$, $Z$, and to the remaining dominating components of these two analytic subsets. The flatness then gives a further nonempty analytic open set on which $X_y$ and $Z_y$ are pure-dimensional of their generic dimensions, whereas $C_y$ and $\widetilde C_y$ have strictly smaller dimension. Thus no irreducible component of $X_y$ or $Z_y$ is contained in the corresponding exceptional subset.

Let $D$ be an irreducible component of $Z_y$. The set $D\cap Z^{\circ}$ is nonempty and dense in $D$, and its inverse image is biholomorphic to it. The closure of this inverse image in $X_y$ is an irreducible component $\widetilde D$ of $X_y$, and the proper map $\widetilde D\to D$ is biholomorphic over the dense open set $D\cap Z^{\circ}$. Hence it is bimeromorphic. Conversely, every component of $X_y$ is obtained in this way because it is not contained in $\widetilde C$. This proves the claim. 
\end{proof}
In the following proposition, based on Campana's fundamental construction of relative MRC quotient, after a modification, we construct a holomorphic model for the generic fiber of relative MRC quotient.
\begin{proposition}\label{prop:relative-mrc}Let $p: M\to Y$ be a surjective holomorphic map with connected fibres from a compact K\"ahler manifold to a normal compact complex space. There exist smooth compact K\"ahler manifolds $W$ and $Z$, a modification $\rho\colon W\to M$, and surjective holomorphic maps
\[
 q: W\to Z,
 \qquad
 h: Z\to Y
\]
such that
\[
 p\circ\rho=h\circ q.
\]
Moreover, the general fibre of $q$ is  rationally connected, and for very general $y\in Y$, the induced map
\[
 q_y: W_y\to Z_y
\]
is a holomorphic model of the MRC quotient of $M_y$ with smooth source and target. 
\end{proposition}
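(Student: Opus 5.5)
We begin from Campana's relative MRC (rational quotient) construction for the compact Kähler manifold $M$ over $Y$. Campana's theory gives a meromorphic map $\phi: M\dashrightarrow Z_0$ over $Y$, with a compact complex space $Z_0$ and a map $h_0: Z_0\to Y$, such that $\phi$ is almost holomorphic. Concretely, this map is constructed from the irreducible component of the relative cycle space (Barlet space) $\mathcal C(M/Y)$ parametrizing chains of rational curves. It has two properties. First, its general fibres are rationally connected, since they are equivalence classes for the relation of being connected by chains of rational curves. Second, for very general $y\in Y$ its restriction to $M_y$ is the MRC quotient of $M_y$. Since $M$ is Kähler, the relevant components of the Barlet space are compact. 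Hence $Z_0$ is a compact space in Fujiki class $\mathcal C$; in particular it is bimeromorphic to a compact Kähler manifold.

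To make everything holomorphic and smooth, we carry out three steps.
\begin{enumerate}
\item Take $\Gamma\subset M\times_Y Z_0$ to be the closure of the graph of $\phi$.
\item Take a resolution $Z\to Z_0$, chosen as a Kähler manifold using the class $\mathcal C$ property and Hironaka/Chow-type resolution. Set $h$ to be the composite $Z\to Z_0\to Y$.
\item Take $W$ to be a resolution of the main component of $\Gamma\times_{Z_0}Z$, obtained by successive blow-ups of $M$ along smooth centres, so that $\rho: W\to M$ is a modification that is a composite of blow-ups. Then $W$ is compact Kähler, and $q: W\to Z$ is holomorphic with $p\circ\rho=h\circ q$.
\end{enumerate}
Surjectivity of $q$ and $h$ is automatic from dominance and properness. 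The general fibre of $q$ is bimeromorphic to a general fibre of $\phi$. Rational connectedness is a bimeromorphic invariant among compact Kähler manifolds, so this fibre is rationally connected.

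For the fibrewise statement, we apply Lemma \ref{lem:modifiber} to the modification $\rho: W\to M$ over $Y$ and to $Z\to Z_0$ over $Y$. We also use generic smoothness on the smooth manifolds $W$ and $Z$: critical values of $p\circ\rho$ and $h$ form proper analytic subsets. This gives a Zariski open $U\subset Y$ such that for $y\in U$ the fibres $W_y$ and $Z_y$ are smooth, and $\rho_y: W_y\to M_y$ is bimeromorphic. Here $M_y$ is connected, so $W_y$ is irreducible. Likewise $Z_y\to Z_{0,y}$ is bimeromorphic. Intersecting $U$ with the very general set where $\phi|_{M_y}$ is the MRC quotient of $M_y$, we find that $q_y$ is the composite of $\rho_y$ with a holomorphic model of $\phi_y$, up to the bimeromorphism $Z_y\to Z_{0,y}$. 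Since MRC quotients are defined up to bimeromorphic equivalence, $q_y: W_y\to Z_y$ is a holomorphic model of the MRC quotient with smooth source and target.

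The main obstacle is the compatibility between the relative construction and its fibres: that for very general $y$ the restriction of the relative rational quotient is the MRC quotient of $M_y$, and not something coarser. I would argue as follows. Rational curves in fibres of $p$ move in countably many families of the relative Barlet space. Each family either dominates $Y$ or lies over a proper analytic subset. Removing the countably many non-dominating images, the chain-connectedness relation on $M_y$ becomes the restriction of the relative relation. This is precisely why only very general, rather than general, $y$ can be claimed.
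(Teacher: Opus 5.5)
Your proposal is correct and follows essentially the same route as the paper. The paper also starts from Campana's relative rational quotient, taking the very-general fibrewise compatibility from \cite[Proposition~2.8]{Cam}, and then builds a smooth compact K\"ahler modification $Z$ of its base. It eliminates the indeterminacy of the induced almost holomorphic map $M\dashrightarrow Z$ by blow-ups to get $\rho\colon W\to M$, and compares fibres using Lemma~\ref{lem:modifiber} together with generic smoothness.

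Two small points differ. The paper does not assume the map from Campana's quotient to $Y$ is holomorphic: it first passes to the normalized graph of $a\colon R\dashrightarrow Y$. Also, your $W$ is better described, as in the paper, as an elimination of indeterminacy of $M\dashrightarrow Z$ by blow-ups of $M$, rather than as a resolution of $\Gamma\times_{Z_0}Z$.
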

\begin{proof}Campana's relative rational quotient \cite[Theorem~1.1 and Proposition~2.8]{Cam} gives an irreducible compact complex space $R=R(p)$, an almost-holomorphic fibration
\[
 r\colon M\dashrightarrow R,
\]
and a dominant meromorphic map $a\colon R\dashrightarrow Y$ such that $p=a\circ r$. Let $\Gamma_a\subset R\times Y$ be the closure of the graph of $a$, with projections $p_R$ and $p_Y$. For every $y\in Y$, define
\[
 R_y:=\bigl(R(p)\bigr)_y
 :=p_R\bigl(p_Y^{-1}(y)\bigr)\subset R.
\]
 Thus $R_y$ is a compact analytic subset. By \cite[Proposition~2.8]{Cam}, for $y$ very general 
\[
 r_y\colon M_y\dashrightarrow R_y
\]
is the rational quotient of $M_y$. Replacing $R$ by its normalization, one may assume $R$ is normal. Let $R_1$ be the normalization of $\Gamma_a$ and let
\[
 \gamma\colon R_1\to R,
 \qquad
 a_1\colon R_1\to Y
\]
be the projections. Further choose a smooth compact K\"ahler modification
\[
 \nu\colon Z\to R_1
\]
and set
\[
 h:=a_1\circ\nu,
 \qquad
 b:=\gamma\circ\nu\colon Z\to R.
\]
Thus $b$ is a modification. Put
\[
 s:=b^{-1}\circ r\colon M\dashrightarrow Z.
\]

The map $s$ is almost holomorphic. Indeed, choose a nonempty Zariski-open subset $R^0\subset R$ on which $b$ is an isomorphism and which is disjoint from the proper analytic set $r(I_r)$. For $Z^0:=b^{-1}(R^0)$, every fibre of $s$ over $Z^0$ is the corresponding fibre of $r$ and avoids $I_r$. Hence $s(I_s)\cap Z^0=\varnothing$.

We resolve the indeterminacy of $s$ by blow-ups
\[
 \rho: W\to M.
\]
One may also assume that $\rho$ is an isomorphism over the holomorphic locus of $s$. Let $q=s\circ\rho$, then  $q: W\to Z$ is holomorphic and surjective and \[p\circ\rho=h\circ q.\]

For a general $z\in Z$, the point $b(z)$ lies in the locus where $b$ is an isomorphism and $r$ has a smooth regular fibre and $\rho$ is an isomorphism along that fibre. Then $q^{-1}(z)$ is isomorphic to a smooth general fibre of Campana's relative rational quotient. Such a fibre is rationally chain connected and therefore connected. $Z$ is normal, 
so
\[
 q_*\mathcal O_W=\mathcal O_Z,
\]
and every fibre of $q$ is connected. Similarly, for every $y\in Y$,
\[
 h^{-1}(y)=q\bigl((p\circ\rho)^{-1}(y)\bigr)
\]
is also connected.

It remains only to compare the fibres with Campana's fibrewise quotient. After shrinking to a Zariski open subset $U\subset Y_{\rm reg}$, generic smoothness and Lemma~\ref{lem:modifiber}, applied to the normalization $R_1\to\Gamma_a$, to $\nu\colon Z\to R_1$, and to $\rho\colon W\to M$, gives that $M_y$, $W_y$, and $Z_y$ are smooth, $\rho_y\colon W_y\to M_y$ is a modification, and the natural map
\[
 \beta_y\colon Z_y\longrightarrow R_y
\]
induced by $b\colon Z\to R$ is proper and bimeromorphic. 

For very general $y\in U$, by construction,
\[
 \beta_y\circ q_y=r_y\circ\rho_y
\]
as meromorphic maps. Since $\beta_y$ and $\rho_y$ are bimeromorphic, thus $q_y\colon W_y\to Z_y$ is a holomorphic model of the MRC quotient $r_y$. This proves the proposition.
\end{proof}
To apply the variation of Bergman-kernel method, we show that there is a fixed index $m_0$ such that the $m_0$-th pluricanonical bundle of a generic fiber has a section.  
\begin{lemma}\label{lem:index}
Let $h: Z\to Y$ be a proper surjective holomorphic map between compact complex manifolds. Assume that $h$ is smooth over an analytic Zariski-open subset $U\subset Y$ and that
$\kappa(Z_y)=0$ for very general $y\in U$. Then there exists an integer $m_0>0$ and a nonempty analytic Zariski-open subset $U_0\subset U$ such that
\[
 H^0(Z_y,m_0K_{Z_y})\neq 0
\]
for every $y\in U_0$.
\end{lemma}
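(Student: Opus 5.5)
The plan is a Baire-category argument combined with upper semicontinuity of $h^0$ in smooth families. For each integer $m>0$, let
\[
 S_m:=\{y\in U:\ H^0(Z_y,mK_{Z_y})\neq 0\}.
\]
Over $U$ the map $h$ is smooth and proper, so $mK_{Z/Y}|_{h^{-1}(U)}$ is a line bundle flat over $U$ whose restriction to $Z_y$ is $mK_{Z_y}$. Grauert's semicontinuity theorem then says that $y\mapsto h^0(Z_y,mK_{Z_y})$ is upper semicontinuous in the analytic Zariski topology. Consequently each $S_m$ is a closed analytic subset of $U$. More precisely, $S_m=\{y\in U:\ h^0\geq 1\}$, and this is the support locus of a coherent direct image, computed locally by a complex of vector bundles, so it is analytic.

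The hypothesis $\kappa(Z_y)=0$ for very general $y$ means that $U\setminus\bigcup_m S_m$ is contained in a countable union of proper closed analytic subsets of $U$. Hence
\[
 U=\bigcup_{m\geq1}S_m\ \cup\ \bigcup_{k}A_k,
\]
a countable union of closed analytic subsets. Since $U$ is a connected complex manifold (a Zariski-open subset of an irreducible space), Baire's theorem applies. A proper closed analytic subset has empty interior, so some $S_{m_0}$ must have nonempty interior. By the identity principle on the irreducible manifold $U$, an analytic subset with interior equals all of $U$. This gives $S_{m_0}=U$, and we may take $U_0=U$; alternatively, shrink $U$ to a smaller Zariski-open set on which the generic value of $h^0$ is attained.

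The main point to check is the analyticity of $S_m$ in the compact Kähler (not necessarily projective) setting. I would handle this with Grauert's direct image theorem: $h_*(mK_{Z/Y})$ is coherent, and the jump loci of $h^0$ along fibres are analytic by the standard base-change and semicontinuity package. These results hold for proper flat holomorphic maps with no projectivity assumption. Irreducibility of $U$ is needed for the identity-principle step; if $Y$ is irreducible this is automatic, since $U$ is Zariski open in it.
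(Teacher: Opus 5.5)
Your proposal is correct and follows essentially the same route as the paper: Grauert's semicontinuity makes each locus $\{y\in U: h^0(Z_y,mK_{Z_y})\geq 1\}$ a closed analytic subset of the connected manifold $U$, and the Baire category theorem forces one of these loci to be all of $U$, so $U_0=U$ works. Your version of the Baire step is a bit cleaner than the paper's wording: since the proper analytic sets $A_k$ are nowhere dense, some $S_{m_0}$ has nonempty interior and hence equals $U$ by the identity principle.
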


\begin{proof}
Replacing $U$ by a connected component, one may assume that $U$ is connected. For every $m\geq1$, the line bundle $mK_{Z/U}$ is flat over $U$ by further shrink $U$ if necessary. By Grauert's direct image Theorem \cite{Grauert}
\[
\Sigma_m:=\{y\in U\mid h^0(Z_y,mK_{Z_y})\geq1\}
\]
is a closed analytic subset of $U$.

By assumption, there is a countable union $N$ of proper closed analytic subsets of $U$ such that every $y\in U\setminus N$ satisfies $\kappa(Z_y)=0$. Hence
\[
 U\setminus N\subset \bigcup_{m\geq1}\Sigma_m.
\]
If every $\Sigma_m$ were proper subset of $U$, the connected complex manifold $U$ would be a countable union of proper closed analytic subsets. By the Baire category theorem, the union of $N$ and $\bigcup_{m\geq1}\Sigma_m$ are proper subset of $U$. Thus $U=\Sigma_{m_0}$ for some $m_0$. Taking $U_0=U$ proves the assertion.
\end{proof}

In the following Proposition, we combine the raltive MRC quotient and Lemma \ref{lem:index} to study the canonical bundle of $Z$ over $Y$. The appearance of singularities of $Y$ will make the problem slightly more complicated.
\begin{proposition}\label{prop:ztoy}
Under the hypotheses of Theorem~\ref{thm:main}, let $\pi: M\to X$ be the log resolution in Lemma~\ref{lem:genericfiber}. Let
\[
 W\xrightarrow{q}Z\xrightarrow{h}Y
\]
be the relative MRC model of Proposition~\ref{prop:relative-mrc} applied to $p$. Then there is an effective $\R$-divisor $D$ on $Z$ such that
\begin{enumerate}
\item 
 $K_{Z/Y}+D$
is pseudo-effective,
\item 
 $\codim_Y h(\Supp D)\geq 2.$
\end{enumerate}
\end{proposition}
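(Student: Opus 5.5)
We first establish that the very general fibres of $h$ have Kodaira dimension zero. By Lemma~\ref{lem:genericfiber}, over a Zariski open $U\subset Y_{\mathrm{reg}}$ the fibre $(X_y,\Delta_y)$ is a log canonical pair with $-(K_{X_y}+\Delta_y)$ nef, and $\pi_y\colon M_y\to X_y$ is a log resolution. By Proposition~\ref{prop:relative-mrc}, for very general $y$ the map $q_y\colon W_y\to Z_y$ is a holomorphic model of the MRC quotient of $M_y$, hence of $X_y$, with smooth source and target. By Claudon--H\"oring \cite{CH} the fibrewise MRC map is projective, so the projective result of \cite{FGSW} on the base of the MRC quotient of an anti-nef log canonical pair applies and gives $\kappa(Z_y)=0$. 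After shrinking $U$ we may assume $h$ is smooth over $U$. Lemma~\ref{lem:index} then supplies $m_0$ and a Zariski open $U_0\subset U$ with $H^0(Z_y,m_0K_{Z_y})\neq0$ for all $y\in U_0$.

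Next, we apply the variation of relative Bergman metrics \cite[Theorem~2.3]{Wang} to the pluricanonical bundle $m_0K_{Z/Y}$ over $U_0$. Since $h_*(m_0K_{Z/Y})$ is nonzero, the relative $m_0$-Bergman kernel metric gives a singular hermitian metric on $K_{Z/Y}$ over $h^{-1}(U_0)$ with semipositive curvature current. A subtlety is that $Y$ is only $\Q$-Gorenstein and may be singular, so we cannot literally write $K_{Z/Y}$. We therefore define $K_{Z/Y}:=K_Z-h^*K_Y$ as an $\R$-class, using a Cartier multiple $rK_Y$. On the smooth locus $Y_{\mathrm{reg}}$, the Bergman metric is the one attached to $K_Z-h^*K_{Y_{\mathrm{reg}}}$. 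The key input from the Bergman theory is the uniform $L^{2/m_0}$ mean value bound. It shows that the weights are bounded above near points of $h^{-1}(Y_{\mathrm{reg}}\setminus U_0)$ lying over codimension-one points of $Y$, where the locally trivialised base makes the extension argument work, in the style of \cite{BP,PT}. Hence the metric extends across $h^{-1}(Y_{\mathrm{reg}})$, possibly after twisting by divisors with vertical support.

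The main obstacle is controlling the correction divisors. Two sources of non-positivity or poles remain. The first is the discrepancy between $h^*K_Y$ and $K_Z$ along the codimension-$\geq2$ locus $Y_{\mathrm{sing}}$, where the extension of the metric may fail. The second is vertical divisors over $Y_{\mathrm{reg}}$ whose images have codimension one, together with the multiple-fibre phenomena that arise in the Bergman extension. For the first, since $\operatorname{codim}Y_{\mathrm{sing}}\ge2$ ($Y$ is normal), any divisor of $Z$ mapping into $Y_{\mathrm{sing}}$ has image of codimension $\geq2$. So we would take a closed positive current defined on $h^{-1}(Y_{\mathrm{reg}})$ and extend it across $h^{-1}(Y_{\mathrm{sing}})$. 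The extension is handled by El Mir/Skoda, using the local boundedness of the weights near general points of $h^{-1}(Y_{\mathrm{reg}})$, at the cost of adding an effective divisor $D$ supported on the $h$-exceptional divisors over $Y_{\mathrm{sing}}$ (or more generally over codimension-$\geq2$ subsets). For the second, the Bergman extension theorem over a smooth base yields positivity of $K_{Z/Y}$ itself without divisorial correction over codimension-one points. This is the part of \cite[Theorem~2.3]{Wang} we rely on, and it is what must be checked carefully. Combining the two, $K_{Z/Y}+D$ contains a closed positive current with $D\ge0$ and $\operatorname{codim}_Y h(\Supp D)\geq2$, which proves items (1) and (2).
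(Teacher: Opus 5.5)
There is a genuine gap at the step where you pass across $h^{-1}(Y_{\rm sing})$.

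The El Mir--Skoda extension theorem requires the current on $h^{-1}(Y_{\rm reg})$ to have locally finite mass near $h^{-1}(Y_{\rm sing})$. Equivalently, you need some a priori control on the growth of the Bergman weight there. Boundedness of the weight near points of $h^{-1}(Y_{\rm reg})$ says nothing about this. Near $Y_{\rm sing}$ you have no local base coordinates with which to run the Ohsawa--Takegoshi estimate, and no a priori comparison between the fibrewise $L^{2/m_0}$ norms and a local generator of $rK_Y$. A positive current on the complement of an analytic set can have infinite mass near it. So the extension, and with it the production of $D$, is unjustified. You also do not explain why the correction comes with the sign $K_{Z/Y}+D$, $D\geq 0$. (If you did have finite mass, the class of the trivial extension would differ from $K_{Z/Y}$ by a real combination of classes of divisors contained in $h^{-1}(Y_{\rm sing})$, and moving the negative part across would give an effective $D$. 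But the mass bound is the whole difficulty.) Relatedly, \cite[Theorem~2.3]{Wang} is a statement over a compact smooth base, so even your step over the noncompact $Y_{\rm reg}$ needs a local version that you do not supply.

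The missing idea, which is how the paper proceeds, is to avoid any analysis near $Y_{\rm sing}$ by base change.
\begin{enumerate}
\item Take a resolution $\tau\colon Y'\to Y$ and a resolution $Z'$ of the main component of $Z\times_Y Y'$, with maps $\sigma\colon Z'\to Z$ and $h'\colon Z'\to Y'$.
\item Since $\kappa(Z_y)=0$ and Lemma~\ref{lem:index} gives a uniform $m_0$, the relative Bergman theorem \cite[Theorem~2.3]{Wang} applies to $h'$ over the compact smooth base $Y'$. It gives that $K_{Z'/Y'}$ is pseudo-effective.
\item Write $K_{Y'}+F^-=\tau^*K_Y+F^+$ and $K_{Z'}=\sigma^*K_Z+B$. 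This yields $\sigma^*K_{Z/Y}+B+h'^*F^-=K_{Z'/Y'}+h'^*F^+$.
\item Set $D:=\sigma_*h'^*F^-$. Then $\sigma^*(K_{Z/Y}+D)$ plus a $\sigma$-exceptional divisor is pseudo-effective, and Lemma~\ref{lem:descent1} descends this to $Z$.
\item Finally, $h(\Supp D)\subset\tau(\Supp F^-)$, which has codimension $\geq 2$ because $Y$ is normal.
\end{enumerate}
Here $D$ is explicitly the pull-back of the negative-discrepancy part of $Y'\to Y$. This is exactly the control that your El Mir--Skoda step lacks.
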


\begin{proof}
Choose a very general $y$ in the open set supplied by Lemma~\ref{lem:genericfiber} and in the open set of Proposition~\ref{prop:relative-mrc}. The relative MRC quotient gives a meromorphic MRC map
\[
 \psi_y: X_y\dashrightarrow Z_y
\]
whose elimination of indeterminacies is the commutative diagram
\begin{equation}\label{eq:fibre-elimination}
\begin{CD}
 W_y @>{q_y}>> X_y\\
 @V{\mu_y}VV @VV{\psi_y}V\\
 Z_y @>{Id}>> Z_y
\end{CD}
\end{equation}
where
 $\mu_y:=(\pi\circ\rho)|_{W_y}.$
Here $W_y$ and $Z_y$ are smooth compact K\"ahler manifolds and $Z_y$ is not uniruled. Since $q_y$ is a holomorphic model of the MRC quotient with smooth source and target, Claudon--H\"oring's theorem \cite[Theorem~1.2]{CH} implies that $q_y$ is projective. Thus \eqref{eq:fibre-elimination} is precisely the projective elimination required in \cite[Proposition~3.2]{FGSW}. By Lemma~\ref{lem:genericfiber}, the source pair $(X_y,\Delta_y)$ is a normal log canonical compact K\"ahler pair and its anti-log-canonical class is nef. By \cite{FGSW}, one has
\begin{equation}\label{eq:kappa}
 \kappa(Z_y)=0
\end{equation}
for very general $y\in Y$. 

Choose a K\"ahler resolution $\tau: Y'\to Y$. Let $Z'$ be a smooth compact K\"ahler resolution of the main component of $Z\times_Y Y'$, and denote the induced maps by
\begin{equation}\label{eq:basechange}
\begin{CD}
 Z' @>{\sigma}>> Z\\
 @V{h'}VV @VV{h}V\\
 Y' @>{\tau}>> Y.
\end{CD}
\end{equation}
Over the locus where $\tau$ is an isomorphism, the main component agrees with $Z$, hence $\sigma$ is a modification. Moreover,
$h'$ has connected fibres. Its general fibre is smooth and birational to a general fibre of $h$.

By \eqref{eq:kappa} and Lemma~\ref{lem:index}, there is a fixed integer $m_0>0$ such that
\[
 H^0(Z'_{y'},m_0K_{Z'_{y'}})\neq0
\]
for general $y'\in Y'$. Cao's relative Bergman theorem in the form \cite[Theorem~2.3]{Wang}  (see also \cite{Cao}) implies that 
$K_{Z'/Y'}$ is pseudo-effective.

Since $Y$ is $\Q$-Gorenstein, write
\[K_{Y'}+F^-=\tau^*K_Y+F^+,\]
where $F^+$ and $F^-$ are $\tau$-effective exceptional divisors with no common components. Since $Z$ is smooth, we also write
\[
 K_{Z'}=\sigma^*K_Z+B,
\]
where $B$ is $\sigma$-exceptional and effective. Therefore, we have
\begin{equation}\label{eq:AAA}
 \sigma^*K_{Z/Y}+B+h'^*F^-
 =K_{Z'/Y'}+h'^*F^+.
\end{equation}
Set
\[
 D:=\sigma_*(h'^*F^-)\geq0,
\]
then $D$ is a $\R$-Cartier divisor. At the generic point of every prime divisor on $Z$, the coefficient of $D$ is the coefficient of $h'^*F^-$ along its strict transform. Hence
\[
 h'^*F^- -\sigma^*D
\]
is $\sigma$-exceptional with possibly negative coefficients. It follows from $K_{Z'/Y'}$ is pseudo-effective and \eqref{eq:AAA} that
\[
 \sigma^*(K_{Z/Y}+D)+[B+h'^*F^--\sigma^*D]
\]
is pseudo-effective. Since \[B+h'^*F^--\sigma^*D\] is $\sigma$ exceptional, Lemma~\ref{lem:descent1} implies that $K_{Z/Y}+D$ is pseudo-effective. This proves item $(1)$.
If a prime divisor $P\subset Z$ contained in $D$, it is dominated by a component of $h'^*F^-$. The commutative square \eqref{eq:basechange} implies that
\[
 h(P)\subset \tau(\Supp F^-),
\]
and hence
\[
 h(\Supp D)\subset \tau(\Supp F^-).
\]
Every component of $F^-$ is $\tau$-exceptional, and $Y$ is normal, so the right-hand side has codimension at least two. This proves item $(2)$. 
\end{proof}

\section{Proof of the main theorem}\label{sec:proof}
We first prove our main theorem \ref{thm:main} with the additional assumption that the fibers of $f:X\to Y$ is connected. \begin{proof}
Suppose that we are under the assumption of Theorem~\ref{thm:main} and moreover $f$ has connected fibers.
We first pick a log resolution $\pi: M\to X$ supplied by Lemma~\ref{lem:genericfiber}. Then after modification $\rho: W\to M$, Proposition~\ref{prop:relative-mrc} gives a relative MRC factorization  $W\xrightarrow{q}Z\xrightarrow{h}Y$. Next, Proposition~\ref{prop:ztoy} yields a divisor $D\geq0$ on $Z$ such that $K_{Z/Y}+D$ is pseudo-effective.

Put
\[g=\pi\circ\rho:W\to X,\qquad
 p=f\circ\pi: M\to Y,
 \qquad
 p_W=p\circ\rho=h\circ q: W\to Y.
\]
Let
\[
 \alpha:=-g^*c_1(K_X+\Delta)\in H^{1,1}(W,\R).
\]
By assumption, $\alpha$ is nef.


Write
\begin{equation}\label{eqn:adjWY}
 K_W+H
 =g^*(K_X+\Delta)+F, \qquad F,H\geq0,
\end{equation}
with no common components and all the coefficients of $H$ are no larger than $1$. Pick an relative ample $\mathbb Q$-divisor $A$, and fix a  small rational number $\epsilon>0$, one has
\begin{equation}\label{eq:asymklt}
 -K_W+F-\epsilon^2H+\epsilon A
 =-g^*(K_X+\Delta)+(1-\epsilon^2)H+\epsilon A.
\end{equation}
And since \[-g^*(K_X+\Delta)+\epsilon A\]
is $g$-relatively ample, the RHS of \eqref{eq:asymklt} is asymptotically klt, so is the LHS \[N:=-K_W+F-\epsilon^2H+\epsilon A.\] Moreover, $F$ is effective and $\epsilon A+\epsilon^2 H$ is $g$-relatively ample.  Thus for a fixed $\epsilon$ and any sufficiently large and divisible integer $m_0$, one has 
\[f_*(m_0(K_W+N))=f_*(m_0(F-\epsilon^2H+\epsilon A))\]
is a non-zero sheaf. Then applying \cite[Proposition 3.1]{FGSW} yield that \[K_{W/Z}+N=-q^*K_Z+F-\epsilon^2H+\epsilon A\] 
is pseudo-effective. Let $\epsilon\to 0$, one has
\begin{equation}\label{eq:KZZ}-q^*K_Z+F\end{equation} is pseudo-effective.

By Proposition~\ref{prop:ztoy}, $K_{Z/Y}+D$ is pseudo-effective. Since $Z$ is smooth, $D$ is $\R$-Cartier and $q^*D$ is an effective $\R$-divisor. Pulling the class back by $q$ and adding it to \eqref{eq:KZZ}, one obtain
\begin{equation}\label{eq:WY-psef}
 -p_W^*c_1(K_{Y})+F+[q^*D]
 \quad\text{is pseudo-effective.}
\end{equation}

By \eqref{eqn:adjWY}, $F$ is   $g=\pi\circ\rho: W\to X$ exceptional and the map $p_W:W\to Y$ factor through $g$ as $p_W=f\circ g$. Moreover, $p_W(\Supp q^*D)\subset h(\Supp D),$ which has codimension at least two by Proposition \eqref{prop:ztoy}. Applying Lemma~\ref{lem:descent2} to \eqref{eq:WY-psef}, with
\[
 L=-K_Y,
 \qquad
 G=q^*D,
 \qquad
 F=F,
\]
shows that $-K_Y$ is pseudo-effective. This proves Theorem~\ref{thm:main} under the assumption that $f$ has connected fibers.
\end{proof}

Now we proceed to prove the main theorem \ref{thm:main} by removing the assumption that $f$ has connected fibers, using a standard Stein-Factorization argument.
\begin{proof} Suppose that we are under the assumption of Theorem~\ref{thm:main}. Take a Stein factorization of $X\to Y$:
\[X\xrightarrow{\gamma}S\xrightarrow {\nu}Y.\]
Note that $S$ is normal but not necessarily $\mathbb Q$-Gorenstein. Then by the relative MRC quotient construction Proposition \ref{prop:relative-mrc} and Lemma \ref{lem:index}, there are smooth  manifolds $W$ and $Z$, and maps 
\[q:W\to Z, \qquad h:Z\to S\]such that \begin{equation}\label{eqn:Z}-q^*K_{Z}+F\end{equation} is pseudo-effective for some effective divisor $F$ on $W$, and for some Zariski open set $S^\circ$ of $S$, \[H^0(Z_s,m_0K_{Z_s})\neq 0, \,\,\,\textnormal{for}\,\,\,s\in S^\circ\] After choosing a suitable Zariksi open set $Y^\circ$ of $Y$, one may assume that  $\nu^{-1}(Y^\circ)$ is contained in $S^\circ$ and $\nu$ is a \'etale map when restricted to $\nu^{-1}(Y^\circ)$. Thus for $y\in Y^\circ$, one has
\begin{equation}\label{eqn:decom}H^0(Z_y,m_0K_{Z_y})=\bigoplus_{s\in\nu^{-1}(y)} H^0(Z_s,m_0K_{Z_s}).\end{equation}
Thus the argument of Proposition \ref{prop:ztoy} yield a divisor $D$ on $Z$ satisfying \begin{equation}\label{eqn:ztoy2}K_{Z/Y}+D\end{equation} is pseudo-effective,
and $\codim_Y \nu\circ h(\Supp D)\geq 2.$ The point is that the relative Bergman theorem \cite[Theorem~2.3]{Wang} does not require the fiber $Z_y$ to be connected and \eqref{eqn:decom} suffices for the purpose. Then the theorem follows form \eqref{eqn:Z},\eqref{eqn:ztoy2} and Lemma \ref{lem:descent2}.
\end{proof}


\small
\begin{thebibliography}{99}

\bibitem{BS}
C. B\u{a}nic\u{a} and O. St\u{a}n\u{a}\c{s}il\u{a},
\emph{Algebraic methods in the global theory of complex spaces},
Editura Academiei and John Wiley \& Sons, Bucharest--London--New York, 1976.
\bibitem{BP}{B. Berndtsson, M. Paun, \emph {Bergman kernels and the pseudoeffectivity of relative canonical bundles
Berndtsson}, 
Duke Math. J. 145 (2008), no. 2, 341–378}
\bibitem{Cam2}F. Campana \emph{Orbifolds, Special Varieties and Classification Theory}, Ann. Inst.
Fourier (Grenoble) 54 (2004), 499–630
\bibitem{Cam}
F. Campana,
\emph{Orbifolds, special varieties and classification theory: an appendix},
Ann. Inst. Fourier (Grenoble) \textbf{54} (2004), no.~3, 631--665.

\bibitem{Cao}
J. Cao,
\emph{Ohsawa--Takegoshi extension theorem for compact K\"ahler manifolds and applications},
in Complex and Symplectic Geometry, Springer INdAM Ser. \textbf{21}, Springer, Cham, 2017, 19--38.



\bibitem{CCM} F. Campana, J. Cao, and S. Matsumura, \emph{Projective klt pairs with nef anti-canonical divisor},
Algebr. Geom. 8 (2021), no. 4, 430–464.
\bibitem{CaoI} J. Cao \emph{Albanese maps of projective manifolds with nef anticanonical bundles}, 
Ann. Sci. Éc. Norm. Supér. (4) 52 (2019), no. 5, 1137–1154
\bibitem{CH1} J. Cao, and  H\"oring, A. \emph{
Manifolds with nef anticanonical bundle}, J. Reine Angew. Math. 724 (2017), 203–244.
\bibitem{CH2} J. Cao, and  H\"oring, A. \emph{
A decomposition theorem for projective manifolds with nef anticanonical bundle}, J. Algebraic Geom. 28 (2019), no. 3, 567–597.
\bibitem{CH}
B. Claudon and A. H\"oring,
\emph{Projectivity criteria for K\"ahler morphisms},
arXiv:2404.13927.
\bibitem{CZ}
M. Chen and Q. Zhang,
\emph{On a question of Demailly--Peternell--Schneider},
J. Eur. Math. Soc. \textbf{15} (2013), no.~5, 1853--1858.
\bibitem{Dem}
J.-P. Demailly,
\emph{Complex analytic and differential geometry},
Online textbook, version of June 21, 2012,
\url{https://www-fourier.univ-grenoble-alpes.fr/~demailly/manuscripts/agbook.pdf}.
\bibitem{DPSII}{J.P. Demailly, T. Peternell,  and M.Schneider, {\emph K\"ahler manifolds with numerically effective Ricci class}, Comp. Math., 89, 1993,  p. 217--240}
\bibitem{DPSI}
J.-P. Demailly, T. Peternell, and M. Schneider,
\emph{Compact complex manifolds with numerically effective tangent bundles},
J. Algebraic Geom. \textbf{3} (1994), no.~2, 295--345.
\bibitem{DPSIII} J. Demailly, T. Peternell, M. Schneider, \emph{ Pseudo-effective line bundles on
compact K\"ahler manifolds}, Internat. J. Math 12 (2001), 689–741.
\bibitem{DWZZ}F. Deng, Z. Wang, L. Zhang, and X. Zhou \emph{New characterization of plurisubharmonic functions and positivity of direct image sheaves}, Amer. J. Math. 146 (2024), no. 3, 751–768.
\bibitem{PT} {M. Paun and S. Takayama, \emph{Positivity of twisted relative pluricanonical bundles and their direct images}, J. Algebraic Geom. 27 (2018), no. 2, 211–272.}
\bibitem{FGSW}
X. Fu, B. Guo, J. Song, and J. Wang,
\emph{Fundamental groups of compact K\"ahler varieties with nef anti-canonical bundle},
arXiv:2602.07420.

\bibitem{FHZ}
X. Fu, J. Han, and Y. Zou,
\emph{Variation of K\"ahler--Einstein metrics with mixed singularities},
arXiv:2508.02212.

\bibitem{Fischer}
G. Fischer,
\emph{Complex analytic geometry},
Lecture Notes in Math. \textbf{538}, Springer, Berlin--New York, 1976.

\bibitem{Frisch}
J. Frisch,
\emph{Points de platitude d'un morphisme d'espaces analytiques complexes},
Invent. Math. \textbf{4} (1967/68), 118--138.
\bibitem{Grauert} H. Grauert,\emph{Ein Theorem der analytischen Garbentheorie und die Modulr\"aume komplexer Strukturen},
Inst. Hautes Études Sci. Publ. Math. No. 5 (1960), 64 pp.
 \bibitem{Lu} S. Lu, \emph{A refined Kodaira dimension and its canonical fibration}, Preprint. math.AG/0211029
\bibitem{MWJ}S.-I. Matsumura, and J. Wang, \emph{Structure theorem for projective klt pairs with
nef anti-canonical divisor}, Journal of the European Mathematical Society. (online first)
doi.org/10.4171/jems/1702, 202
\bibitem{MW}
S.-I. Matsumura and X. Wu,
\emph{Compact K\"ahler three-folds with nef anti-canonical bundle},
Math. Ann. \textbf{391} (2025), 1253--1289.
\bibitem{MWWZ} S.-I. Matsumura, J. Wang, X. Wu, and  Q. Zhang \emph{Compact K\"ahler manifolds with nef anti-canonical bundle,} arXiv:2506.23218
\bibitem{Peternell} T. Peternell, \emph{Varieties with generically nef tangent bundles}, J. Eur. Math. Soc.
14 (2012), 571–603. 
\bibitem{Wang}
J. Wang,
\emph{On the Iitaka conjecture $C_{n,m}$ for K\"ahler fibre spaces},
Ann. Fac. Sci. Toulouse Math. (6) \textbf{30} (2021), no.~4, 813--897.

\bibitem{V} E. Viehweg, \emph{Weak positivity and the additivity of Kodaira dimension for certain
fiber spaces}, Adv. Stud. Pure. Math. 1 (1983), 329–353.
\bibitem{Zha96} Q. Zhang, \emph{On projective manifolds with nef anticanonical bundles}, J. Reine Angew. Math. \textbf{478} (1996), 57–60.
\bibitem{Zha19} Q. Zhang, \emph{Algebraic fiber spaces with nef anticanonical bundles}, Math. Ann. 332, 697–703 (2005).
\end{thebibliography}
\end{document}